\documentclass[11pt]{amsart}
\usepackage[english]{babel}
\usepackage[utf8]{inputenc}
\usepackage{amsmath}
\usepackage{graphicx}
\usepackage{amssymb}
\usepackage{amsthm}
\usepackage{tikz-cd}
\usepackage{mathrsfs}
\usepackage[colorinlistoftodos]{todonotes}
\usepackage{enumitem}
\usepackage{yfonts}
\usepackage{xcolor}
\usepackage{mathtools}
\usepackage{hyperref}
\bibliographystyle{plain}
\usepackage{comment}

\title{A Quantitative Selberg's Lemma}
\author{Tsachik Gelander, Raz Slutsky}

\date{}
\newtheorem{thm}{Theorem}[section]
\newtheorem{lem}[thm]{Lemma}
\newtheorem{prop}[thm]{Proposition}

\newtheorem{cor}[thm]{Corollary}

\newtheorem{rmk}[thm]{Remark}

\newcommand{\R}{\mathbb{R}}
\newcommand{\Q}{\mathbb{Q}}
\newcommand{\C}{\mathbb{C}}
\newcommand{\Z}{\mathbb{Z}}
\newcommand{\N}{\mathbb{N}}

\begin{document}

\begin{abstract}
We show that an arithmetic lattice $\Gamma$ in a semi-simple Lie group $G$ contains a torsion-free subgroup of index $\delta(v)$ where $v = \mu (G/\Gamma)$ is the co-volume of the lattice. We prove that $\delta$ is polynomial in general and poly-logarithmic under GRH. We then show that this poly-logarithmic bound is almost optimal, by constructing certain lattices with torsion elements of order $\sim \frac{\log v}{\log \log v}$.
\end{abstract}

\maketitle
\section{Introduction}
The classical Selberg's lemma states the following quite general result:

\begin{thm}
A finitely generated group $\Gamma \leq GL_n(k)$ where $k$ is a field of characteristic zero has a torsion-free subgroup of finite index.
\end{thm}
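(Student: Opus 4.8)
The plan is the classical three-step argument.

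\textbf{Step 1 (arithmetization).} Let $\gamma_1,\dots,\gamma_d$ be generators of $\Gamma$ and let $A\subseteq k$ be the subring generated over $\Z$ by the matrix entries of $\gamma_1^{\pm1},\dots,\gamma_d^{\pm1}$. Then $A$ is a finitely generated integral domain of characteristic zero, it is Noetherian by the Hilbert basis theorem, and $\Gamma\le GL_n(A)$; so we may assume $k=A$.

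\textbf{Step 2 (two congruence subgroups).} I would use the fact that a finitely generated $\Z$-algebra which happens to be a field is necessarily finite (the Nullstellensatz over $\Z$), combined with a localization argument, to see that the maximal ideals of $A$ realize infinitely many distinct residue characteristics. Choose distinct primes $p_1\ne p_2$ and maximal ideals $\mathfrak m_1,\mathfrak m_2\subset A$ with $A/\mathfrak m_i$ finite of characteristic $p_i$. Reduction modulo $\mathfrak m_i$ is a homomorphism $\Gamma\to GL_n(A/\mathfrak m_i)$ to a finite group; let $N_i$ be its kernel, a normal subgroup of finite index. I claim that $N:=N_1\cap N_2$, which still has finite index in $\Gamma$, is torsion-free, which proves the theorem.

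\textbf{Step 3 (the congruence estimate).} Fix $i$. Since $A$ is a Noetherian domain and $\mathfrak m_i$ is a proper ideal, Krull's intersection theorem gives $\bigcap_{k\ge1}\mathfrak m_i^k=0$. Suppose $\gamma\in N_i$ has finite order $m>1$; replacing $\gamma$ by $\gamma^{m/\ell}$ for a prime $\ell\mid m$, we may assume $\gamma$ has prime order $\ell$. Write $\gamma=I+B$ with $B\neq0$ and $B\in M_n(\mathfrak m_i)$, and let $s\ge1$ be maximal with $B\in M_n(\mathfrak m_i^s)$ (well-defined since the above intersection vanishes). From $0=\gamma^\ell-I=\ell B+\sum_{j=2}^{\ell}\binom{\ell}{j}B^j$ and the inclusions $B^j\in M_n(\mathfrak m_i^{js})\subseteq M_n(\mathfrak m_i^{s+1})$ for $j\ge2$, we get $\ell B\in M_n(\mathfrak m_i^{s+1})$. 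If $\ell\ne p_i$ then $\ell\notin\mathfrak m_i$, so $\ell$ is a unit in the local ring $A/\mathfrak m_i^{s+1}$; multiplying the relation $\ell B\equiv0$ there by $\ell^{-1}$ gives $B\in M_n(\mathfrak m_i^{s+1})$, contradicting the maximality of $s$. Hence every element of prime order in $N_i$ has order exactly $p_i$; since an element of prime order in $N_1\cap N_2$ would have to have order $p_1$ and order $p_2$ simultaneously, $N$ is torsion-free.

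\textbf{Main obstacle.} The one delicate case is $\ell=p_i$, which the estimate above does not reach: since $p_i\in\mathfrak m_i$, the inclusion $\ell B\in M_n(\mathfrak m_i^{s+1})$ is automatic and carries no information, and handling it with a single prime would require controlling precisely how $p_i$ sits in the $\mathfrak m_i$-adic filtration, which is awkward once $A_{\mathfrak m_i}$ has dimension larger than $1$. Intersecting the congruence subgroups attached to two different primes circumvents this entirely, leaving only the ring-theoretic input of Step 2 to be made precise.
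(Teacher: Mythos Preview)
Your argument is correct and is the standard proof of Selberg's lemma, but there is nothing to compare it against: the paper does not prove this theorem. Theorem~1.1 is quoted as a classical result in the introduction purely as motivation; the paper's own contributions begin with the quantitative statements (Theorems~\ref{thm: intro GRH main theorem}--\ref{thm: intro lower bound}), which are proved only for arithmetic lattices via Prasad's volume formula and effective prime-ideal counting, and do not yield a proof of the general field-of-characteristic-zero statement.

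On the mathematics itself: Steps~1 and~3 are clean. In Step~3 your claim that $A/\mathfrak m_i^{s+1}$ is local is fine (any maximal ideal containing $\mathfrak m_i^{s+1}$ contains $\mathfrak m_i$ since it is prime), and Krull's intersection theorem applies because $A$ is a Noetherian domain and $\mathfrak m_i$ is proper. The two-prime trick to dodge the $\ell=p_i$ case is exactly the classical device. The only point you leave soft is the existence of maximal ideals with infinitely many residue characteristics; the quickest way to fill it is to note that since $A$ is finitely generated over~$\Z$, only finitely many rational primes are units in $A$, so for any other prime $p$ the ideal $pA$ is proper, hence contained in some maximal $\mathfrak m$, and $A/\mathfrak m$ is then a finitely generated $\Z$-algebra which is a field, hence finite of characteristic~$p$ by the Nullstellensatz over~$\Z$.
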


This lemma is very useful in algebra and geometry, where many times working with torsion-free groups, or equivalently, manifolds instead of orbifolds, is much easier. It is also related to Burnside's problem since it implies that linear finitely generated torsion groups are finite, a result originally due to Schur \cite{schur1911uber}.\\

In particular, this lemma is true for the case of $\Gamma$ a lattice in a semi-simple Lie group $H$. We recall that a lattice is a discrete subgroup such that the quotient $H/\Gamma$ carries a finite $H-$invariant measure. Lattices in Lie groups play a key role in the study of hyperbolic manifolds, locally symmetric spaces, number theory, and more. Our goal is to prove a quantitative result relating the co-volume of the lattice to the index of the torsion-free subgroup. We restrict to arithmetic lattices.  Recall that by the celebrated Margulis Arithmeticity Theorem all lattices in a semi-simple Lie group of higher-rank are arithmetic \cite{margulis1991discrete}.  Note that for non-uniform arithmetic lattices the required bound is a constant depending only on the ambient Lie group $G$ and not on the co-volume (see \cite[Lemma 13.1]{gelander2004homotopy}). However, for uniform lattices the bound on the index does grow with the co-volume as shown in \S 3 below.
Assuming the generalized Riemann hypothesis (GRH) we prove:

\begin{thm}[GRH]\label{thm: intro GRH main theorem}
    Let $\Gamma$ be an arithmetic lattice in a semi-simple Lie group $H$ of co-volume $v$ and let $\varepsilon >0$. Then there exists a constant $c = c(H, \varepsilon)$ such that $\Gamma$ contains a normal subgroup $\Delta$ which is torsion-free and such that $[\Gamma : \Delta] \leq c (\log v)^{(2+\varepsilon)\dim H}$.
\end{thm}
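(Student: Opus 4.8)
The plan is to carry out the quantitative version of the classical proof of Selberg's lemma via congruence subgroups, and to feed into it effective prime counting under GRH together with Prasad's volume formula. First I would unwind the arithmeticity of $\Gamma$: there is a number field $K$, a connected semisimple $K$-group $\mathbf{G}$ with a fixed faithful $K$-embedding $\mathbf{G}\hookrightarrow GL_n$, and, up to compact factors and isogeny, an identification of $H$ with $\prod_{v\mid\infty}\mathbf{G}(K_v)$ under which $\Gamma$ is commensurable with $\mathbf{G}(\mathcal{O}_K)$. Since $\Gamma$ is finitely generated there is a finite set $S_0$ of rational primes, containing those of bad reduction of the chosen model of $\mathbf{G}$, with $\Gamma\leq GL_n(\mathcal{O}_K[1/m_0])$, $m_0:=\prod_{p\in S_0}p$. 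For a prime ideal $\mathfrak{p}\subset\mathcal{O}_K$ of residue characteristic $p\notin S_0$, the principal congruence subgroup $\mathrm{K}(\mathfrak{p})=\ker\!\big(GL_n(\mathcal{O}_{K,\mathfrak{p}})\to GL_n(\mathcal{O}_K/\mathfrak{p})\big)$ is normalized by $GL_n(\mathcal{O}_{K,\mathfrak{p}})\supseteq\Gamma$, so $\Delta:=\Gamma\cap\mathrm{K}(\mathfrak{p})$ is automatically normal in $\Gamma$; its image in $GL_n(\mathcal{O}_K/\mathfrak{p})$ lands in $\mathbf{G}(\mathcal{O}_K/\mathfrak{p})$, whence $[\Gamma:\Delta]\leq|\mathbf{G}(\mathcal{O}_K/\mathfrak{p})|\leq C_1\,N(\mathfrak{p})^{\dim\mathbf{G}}$ for a constant $C_1=C_1(\dim\mathbf{G})$. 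By Minkowski's lemma — in the form that $\mathrm{K}(\mathfrak{p})$ is torsion-free whenever the ramification index satisfies $e(\mathfrak{p}\mid p)<p-1$, and in particular whenever $\mathfrak{p}$ is unramified over $\mathbb{Q}$ and $p\geq 3$ — the theorem reduces to producing such a $\mathfrak{p}$ which, moreover, has residue degree one and norm as small as possible.

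Next I would control the arithmetic data by the covolume. Using Prasad's volume formula together with the effective Borel--Prasad finiteness statements, $v=\mu(H/\Gamma)$ bounds everything in sight: $[K:\mathbb{Q}]\ll_H\log v$, $\log|d_K|\ll_H\log v$, and $\log m_0\ll_H\log v$ (the bad primes of $\mathbf{G}$ are confined to divisors of $d_K$ and finitely many further primes controlled by the arithmetic complexity of $\mathbf{G}$, itself $\ll_H v^{O(1)}$), uniformly over the finitely many admissible types of $\mathbf{G}$ attached to $H$. One also records $\dim\mathbf{G}\leq\dim H$, since, up to a compact factor, $H$ contains a copy of $\mathbf{G}(K_{v_0})$ for an archimedean place $v_0$ where $\mathbf{G}$ is isotropic, and $\dim_{\mathbb{R}}\mathbf{G}(K_{v_0})\geq\dim\mathbf{G}$.

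Finally I would invoke the explicit prime ideal theorem under GRH for $\zeta_K$: the number of degree-one primes $\mathfrak{p}$ with $N(\mathfrak{p})\leq x$ is $\mathrm{Li}(x)+O\!\big(\sqrt{x}\,(\log|d_K|+[K:\mathbb{Q}]\log x)\big)$, so for $x\gg(\log|d_K|)^{2+\varepsilon/2}$ there are $\gg x/\log x$ of them, far more than the at most $[K:\mathbb{Q}]\cdot\omega(2m_0d_K)\ll_H(\log v)^2$ of them one must discard to force $p\notin S_0$, $p\nmid d_K$ (so that $\mathfrak{p}$ is unramified) and $p\geq 3$. Hence there is an admissible degree-one prime with $N(\mathfrak{p})=p\ll_{H,\varepsilon}(\log|d_K|)^{2+\varepsilon}\ll_{H,\varepsilon}(\log v)^{2+\varepsilon}$, and the corresponding $\Delta\trianglelefteq\Gamma$ is torsion-free with $[\Gamma:\Delta]\leq C_1\,p^{\dim\mathbf{G}}\ll_{H,\varepsilon}(\log v)^{(2+\varepsilon)\dim\mathbf{G}}\leq(\log v)^{(2+\varepsilon)\dim H}$, which is the assertion after absorbing constants into $c=c(H,\varepsilon)$ and relabelling $\varepsilon$.

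I expect the main obstacle to be not any individual step but the uniform bookkeeping of the effective estimates: guaranteeing that $[K:\mathbb{Q}]$, $\log|d_K|$, and the number of bad primes all grow at most poly-logarithmically (respectively polynomially) in $v$ through Prasad's formula, with constants uniform over the admissible $\mathbf{G}$'s, and then applying the explicit Chebotarev/prime-ideal estimate in its sharp GRH form (Lagarias--Montgomery--Odlyzko, Bach) with the error term in exactly the right shape — this last input being precisely what produces the exponent $2$ in $(2+\varepsilon)\dim H$.
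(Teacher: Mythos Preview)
Your strategy coincides with the paper's: locate a prime ideal $\mathfrak{p}$ of small norm whose principal congruence subgroup is torsion-free (you invoke Minkowski's criterion $e(\mathfrak{p}\mid p)<p-1$; the paper cites the equivalent formulation in Kionke's thesis), bound $N(\mathfrak{p})\ll_\varepsilon(\log|d_K|+[K:\mathbb{Q}])^{2+\varepsilon}$ via the explicit prime ideal theorem under GRH, and convert this to a bound in $\log v$ via Prasad's volume formula together with Belolipetsky's estimates $[K:\mathbb{Q}],\log|d_K|\ll_H\log v$. The index bound $[\Gamma:\Delta]\leq N(\mathfrak{p})^{\dim\mathbf{G}}$ and the observation $\dim\mathbf{G}\leq\dim H$ are the same in both arguments.

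There is, however, one step that is not justified as written: your claimed bound $\log m_0\ll_H\log v$. Your parenthetical accounts only for the primes of bad reduction of the chosen model of $\mathbf{G}$, but $S_0$ must also absorb every rational prime appearing in the denominators of the matrix entries of generators of $\Gamma$, and this set depends on $\Gamma$ as a concrete subgroup of $\mathbf{G}(K)$, not merely on its commensurability class or covolume. Conjugating $\mathbf{G}(\mathcal{O}_K)$ by an element of $\mathbf{G}(K)$ whose denominator involves arbitrarily many primes produces a lattice of identical covolume with $\omega(m_0)$ as large as one likes, so neither Prasad's formula nor Borel--Prasad finiteness controls it. The paper sidesteps this issue entirely by invoking a lemma of Vinberg that lets one change the integral structure so that $\tilde\Gamma\subset G(\mathcal{O})$ outright; after that reduction there are no denominator primes at all, and the only primes one must avoid are the $p\leq d$, of which there are at most $d\ll_H\log v$. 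Inserting this single reduction (or, equivalently, first passing to a maximal arithmetic overlattice) into your argument closes the gap and renders your proof complete and essentially identical to the paper's.
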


Removing the GRH assumption, we have:

\begin{thm}\label{thm: intro unconditional main theorem}
Let $\Gamma$ be an arithmetic lattice in a semi-simple Lie group $H$ of co-volume $v$. Then there exists a constant $c = c(H)$ such that $\Gamma$ contains a normal subgroup $\Delta$ which is torsion-free and such that $[\Gamma : \Delta] \leq v^{c \dim H}$.
    
\end{thm}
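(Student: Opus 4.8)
The plan is to realize $\Delta$ as a principal congruence subgroup $\Gamma(\mathfrak p)$ at a single prime $\mathfrak p$ of the number field underlying $\Gamma$, chosen as small as the co-volume permits. By the Margulis arithmeticity theorem together with the Borel--Prasad description of arithmetic lattices, $\Gamma$ is commensurable with the group of $\mathcal O_k$-points of a semisimple group $\mathbf G$ of dimension $O(\dim H)$ over a number field $k$, and an arithmetic group is always $S$-integral, so we may assume $\Gamma \le \mathbf G(\mathcal O_k[1/s])$ for some integer $s$. For any prime $\mathfrak p$ of $\mathcal O_k$ with $\mathfrak p \nmid s$ one then has $\Gamma \le \mathbf G(\mathcal O_\mathfrak p)$; the principal congruence subgroup $K_\mathfrak p = \ker\bigl(\mathbf G(\mathcal O_\mathfrak p) \to \mathbf G(\mathcal O_\mathfrak p/\mathfrak p)\bigr)$ is normal in $\mathbf G(\mathcal O_\mathfrak p)$, hence $\Delta := \Gamma \cap K_\mathfrak p$ is normal in $\Gamma$, with $[\Gamma:\Delta] \le |\mathbf G(\mathbb F_{N\mathfrak p})| \le (N\mathfrak p)^{O(\dim H)}$. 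Thus the theorem reduces to exhibiting such a $\mathfrak p$ with $N\mathfrak p \le v^{O(1)}$ for which $\Delta$ is torsion-free.

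For torsion-freeness it suffices that the residue characteristic $p$ of $\mathfrak p$ exceed the order of every torsion element of $\Gamma$: if $\gamma \in \Delta$ has finite order $m$ with $p \nmid m$ and $\gamma \equiv 1 \pmod{\mathfrak p}$, then $\gamma$ is semisimple with eigenvalues $m$-th roots of unity that are all $\equiv 1 \pmod{\mathfrak p}$, and distinct $m$-th roots of unity remain distinct modulo $\mathfrak p$ when $p \nmid m$, so all eigenvalues equal $1$ and $\gamma = 1$. The orders of torsion elements of $\Gamma$ are controlled: an element of order $m$ in $\mathbf G(k) \le GL_n(k)$ forces a cyclotomic relation yielding $\varphi(m) = O([k:\mathbb Q]\dim H)$, and since $D_k \ge c^{[k:\mathbb Q]}$ for an absolute $c>1$ (Minkowski), while Prasad's volume formula gives $D_k \le v^{O(1)}$, we get $[k:\mathbb Q] = O(\log v)$ and hence a bound $M = M(v,H)$, sub-polynomial in $v$, on all torsion orders. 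The same circle of ideas (each place of ramification of $\mathbf G$, and each prime dividing $s$, contributes to the co-volume) shows $\sum_{\mathfrak p \mid s}\log N\mathfrak p = O(\log v)$.

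It remains to find a prime $\mathfrak p$ of $\mathcal O_k$ of norm $\le v^{O(1)}$ with $\mathfrak p \nmid s$ and residue characteristic $> M$. Here one invokes the unconditional effective prime ideal theorem for $\zeta_k$ (Lagarias--Odlyzko): there is an absolute $A$ with $\#\{\mathfrak p : N\mathfrak p \le x\} \ge \tfrac12 \mathrm{Li}(x)$ for all $x \ge D_k^{A}$, a bound that survives the possible Siegel zero of $\zeta_k$ via Deuring--Heilbronn repulsion. Taking $x = D_k^{2A}$ produces polynomially-in-$D_k$ many primes of norm $\le x$, whereas the primes we must avoid --- the $O(\log v)$ divisors of $s$ and ramified primes, together with the at most $\pi(M)\,[k:\mathbb Q]$ primes of residue characteristic $\le M$ --- are far fewer, so a suitable $\mathfrak p$ exists with $N\mathfrak p \le D_k^{2A} \le v^{O(1)}$. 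Combined with the first paragraph this gives $[\Gamma:\Delta] \le (N\mathfrak p)^{O(\dim H)} \le v^{c\dim H}$.

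The main difficulty I anticipate is the arithmetic bookkeeping underlying the second paragraph: extracting from the co-volume alone --- via Prasad's volume formula and the Borel--Prasad structure of maximal arithmetic lattices --- polynomial bounds on $D_k$, on $[k:\mathbb Q]$, on torsion orders, and on the bad set $s$, uniformly over all arithmetic $\Gamma$ in a fixed $H$ (one must also be careful that the smallest co-volume in a commensurability class, attained by the maximal lattice, is not far below $\mu(\mathbf G(\mathcal O_k))$); and, more routinely, checking that $\Gamma \cap K_\mathfrak p$ really is normal in a general arithmetic $\Gamma$, not merely in $\mathbf G(\mathcal O_k)$. The analytic input is essentially a black box, but it is exactly the step at which the argument degrades from the poly-logarithmic bound of Theorem~\ref{thm: intro GRH main theorem}, where effective Chebotarev under GRH furnishes $\mathfrak p$ with $N\mathfrak p \ll (\log D_k)^2$, to the polynomial bound proved here.
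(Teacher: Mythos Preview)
Your approach is correct in outline and would ultimately work, but the paper's argument is far simpler and sidesteps every difficulty you anticipate. After reducing via Vinberg's lemma \cite{vinberg1971rings} to $\tilde\Gamma \subset G(\mathcal O)$ (so there is no $S$-integral bookkeeping whatsoever), the paper simply passes by restriction of scalars to a $\Q$-group $G_1$ with $\dim G_1 = d\cdot\dim H$ and takes the level-$3$ principal congruence subgroup of $G_1(\Z)$. This is torsion-free with index at most $3^{d\,\dim H}$, and since Prasad's formula gives $d = [k:\Q] \le c\log v$, one obtains $3^{c\log v\cdot\dim H} = v^{c'\dim H}$ immediately. Equivalently one may stay over $k$ and use the ideal $3\mathcal O$, which already satisfies Kionke's criterion.

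None of your machinery is needed here: no Lagarias--Odlyzko, no a priori bound on torsion orders, no control of the bad set $s$. Your proposal faithfully mirrors the GRH proof (locate a single prime $\mathfrak p$ of small norm and reduce there), but for a merely polynomial bound in $v$ the full ideal $3\mathcal O$, of norm $3^d$, is already small enough; hunting for a prime of smaller norm only pays off when one wants $N\mathfrak p$ poly-logarithmic in $D_k$, which is precisely where GRH enters. The paper explicitly remarks that unconditional prime-ideal estimates ``do not yield better results than just looking at the ideal $p\mathcal O$ for $p > d$.'' Incidentally, your torsion-freeness criterion (residue characteristic exceeding every torsion order in $\Gamma$) is also stricter than necessary: Kionke's condition only asks that $\mathfrak a^{p-1}\nmid p\mathcal O$ for every rational prime $p$, which holds as soon as $\mathfrak a$ sits above a prime larger than $d$, and over $\Z$ any $p>2$ suffices. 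Finally, the step you flag as the ``main difficulty'' --- bounding $s$ in terms of $v$ --- is genuinely delicate as stated (the denominators of a representative of a commensurability class are not directly visible in the co-volume), and the paper avoids it entirely by invoking Vinberg rather than attempting to control $s$.
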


This is in line with the many works relating the co-volume of lattices in semi-simple Lie groups to their complexity, be it algebraic, geometric or topological. See for example the work of the first author relating the co-volume to the topological complexity of the quotient in \cite{gelander2004homotopy}, works relating the co-volume to the minimal number of generators such as \cite{VVR,gelander2020minimal, lubotzky2022asymptotic}, and many more, for example, \cite{BGS85, CHM, CAM, 7s, abert2017rank, BL12, BL19, BGS20, abert2022homology, abert2023Betti, fraczyk2022growth,  frkaczyk2022homotopy}, and versions for general hyperbolic groups in \cite{lazarovich2021volume, lazarovich2023finite}.

Many of the above results work only in the case where $\Gamma$ is torsion-free since they use geometric methods to analyze the structure of the associated locally-symmetric spaces. Some efforts to extend these results to the case of general lattices are carried out in \cite{samet2013betti,emery2014torsion} for example. Usually, this requires a deep understanding of the geometry around the singular points of the orbifold. Hopefully, our quantitative version of Selberg's lemma will be useful in extending such torsion-free results to general lattices and locally symmetric orbifolds more easily, by passing to a torsion-free subgroup of controlled index in terms of the co-volume. In \S \ref{section:application} below, we shall elaborate on one example of possible application.\\

We remark that in our theorems we prove that there exists a \emph{normal} subgroup which is torsion-free and of small index, and so a natural question is whether there exists a subgroup which is not necessarily normal but of even smaller index.\\

In the second part of this paper, we show a lower bound for the minimal index of a torsion-free subgroup by constructing lattices with torsion elements.  This lower bound shows that the conditional upper bound is almost optimal, namely:

\begin{thm} \label{thm: intro lower bound}
Let $G = SO(p,q)$,  where $p+q \geq 3$.  Then there exist a constant $c>0$ and a sequence of lattices $\Gamma_n$ of volumes $v_n$ such that any torsion-free subgroup  in $\Gamma_n$ is of index at least $c \frac{\log v_n }{\log \log v_n}$.
\end{thm}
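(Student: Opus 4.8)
The plan is to produce, for each $n$, an arithmetic lattice $\Gamma_n$ in $G = SO(p,q)$ containing a torsion element $\gamma_n$ whose order $m_n$ is prime (or a prime power) with $m_n \sim \frac{\log v_n}{\log\log v_n}$, where $v_n$ is the covolume. If such $\gamma_n$ exists, then any finite-index torsion-free subgroup $\Delta \leq \Gamma_n$ must miss the nontrivial powers $\gamma_n, \gamma_n^2, \dots, \gamma_n^{m_n-1}$, and since $\Delta$ has index at least as large as the size of the orbit of the coset $\Delta$ under the cyclic group $\langle \gamma_n\rangle$, we get $[\Gamma_n : \Delta] \geq m_n$. (Concretely: the action of $\langle\gamma_n\rangle \cong \Z/m_n$ on $\Gamma_n/\Delta$ by left translation has no fixed point unless some power of $\gamma_n$ lies in a conjugate of $\Delta$; since $m_n$ is prime and $\Delta$ is torsion-free this forces a free orbit, so $m_n \mid [\Gamma_n:\Delta]$.) So the whole problem reduces to the arithmetic construction: realize a large-order torsion element in a lattice of controlled, not-too-large covolume.

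The key step is to build the lattice so that it contains a cyclic group of large prime order $m$ while keeping the covolume only exponential in $m$ (not worse). Here is the approach I would take. Fix a prime $m$ and work with the cyclotomic field $K = \Q(\zeta_m)$, of degree $\varphi(m) = m-1$, with ring of integers $\mathcal{O}_K = \Z[\zeta_m]$. The element $\zeta_m$ acts on $K$ (as a $\Q$-vector space of dimension $m-1$) by multiplication, giving an embedding $\Z/m \into SL_{m-1}(\Z)$ via the companion matrix of the $m$-th cyclotomic polynomial; more relevantly, one wants this to land in an appropriate orthogonal group. The standard device is to equip a suitable lattice in $K$ (or in $K^r$ for small $r$) with a quadratic form that is $\zeta_m$-invariant — e.g.\ the trace form $q(x) = \mathrm{Tr}_{K/\Q}(x \bar{x})$ twisted so that its signature over $\R$ at the various archimedean places of the maximal real subfield is $(p,q)$ at one place and definite at the others, which is possible when $p + q \geq 3$ by choosing $m$ so that $\varphi(m)$, together with a small auxiliary factor, matches $p+q$, and controlling signs at the other real places. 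This produces an arithmetic lattice $\Gamma_n$ in $SO(p,q)$ (a lattice because the form is anisotropic over $\Q$ away from one place, cocompactly, by a standard restriction-of-scalars / Borel–Harish-Chandra argument) that contains the image of $\zeta_m$, an element of order $m$.

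The remaining point — and the part I expect to require the most care — is the covolume estimate: one must show $v_n = \mu(G/\Gamma_n) \leq e^{O(m \log m)}$, equivalently $m \gtrsim \frac{\log v_n}{\log\log v_n}$. By the Prasad volume formula (or cruder Minkowski-type bounds for the covolume of $SO(q)$-lattices), the covolume is governed by the discriminant of $K$ and local factors; for $K = \Q(\zeta_m)$ one has $\log |d_K| = \varphi(m)\log m - O(\varphi(m)) \sim m \log m$, and the Euler-product/local correction factors contribute only a further $e^{O(m)}$ or so. Hence $\log v_n = O(m \log m)$, which inverts to $m = \Omega\!\left(\frac{\log v_n}{\log\log v_n}\right)$ as desired, with the implied constant $c$ depending only on $p,q$. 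The potential obstacle is ensuring both that the signature condition at the real places can genuinely be met for infinitely many primes $m$ (a congruence/sign-bookkeeping issue on the units and on $2$ in $K^+$), and that the resulting $\Gamma_n$ is a genuine lattice of the claimed type in $SO(p,q)$ rather than in some group only isogenous to it; both are handled by the classical classification of arithmetic lattices in orthogonal groups (Weil, Borel–Harish-Chandra), but verifying the exact parameters for a workable family of $m$ is where the real work lies.
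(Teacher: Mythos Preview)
Your reduction in the first paragraph is correct, and your instinct that the covolume will be governed by a cyclotomic discriminant of size roughly $m^{m}$ (hence $\log v \asymp m\log m$) is exactly the mechanism the paper uses. But the arithmetic construction you sketch has a genuine gap.

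The problem is that your proposed form --- the trace form $q(x)=\mathrm{Tr}_{K/\Q}(x\bar x)$ on $K=\Q(\zeta_m)$ --- is a quadratic form on an $(m-1)$-dimensional $\Q$-vector space, so the orthogonal group you build is a lattice in some $SO$ of rank $m-1$, not in a fixed $SO(p,q)$. You notice this and try to patch it by ``choosing $m$ so that $\varphi(m)$, together with a small auxiliary factor, matches $p+q$,'' but that forces $m$ to be bounded in terms of $p+q$, so you get only finitely many lattices and no sequence $v_n\to\infty$. The two roles of $m$ --- producing torsion of order $m$ and determining the dimension of the quadratic space --- are in conflict. Relatedly, the language about signatures ``at the various archimedean places of the maximal real subfield'' does not apply to a form defined over $\Q$, which sees only one archimedean place.

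The paper resolves this by decoupling the two roles: it keeps the quadratic space of \emph{fixed} dimension $p+q$ (for $SL_2(\R)\cong SO(2,1)$ this is the form $x^2+y^2-cz^2$) but defines it over the totally real field $K^{+}=\Q(\cos(2\pi/p))$, whose degree $(p-1)/2$ grows with $p$. One then chooses the parameter $c\in K^{+}$ so that the form has signature $(p,q)$ at exactly one real embedding and is definite at all others; the resulting $S$-arithmetic group projects to a lattice in $SO(p,q)$. The torsion element of order $p$ is simply the $p$-th rotation matrix, whose entries lie in $\mathcal{O}_{K^{+}}[1/2]$. The covolume is then controlled by $D_{K^{+}}=p^{(p-2)/2}$ via the volume formula, giving $\log v = O(p\log p)$ and hence $p\gtrsim \log v/\log\log v$. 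So the key idea you are missing is: vary the \emph{coefficient field}, not the dimension of the form.
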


This is proved in \S \ref{section:lower bound}. 

\subsection*{Non-arithmetic Lattices} Our result raises the question whether a similar bound can be obtained for non-arithmetic lattices. The answer to that is no.  For example, in $\mathrm{SL}_2(\R)$, one can construct Fuchsian groups of signature $(g, m_1,...,m_k)$, where $g$ is the genus and $m_1,...,m_k$ are the orders of torsion elements.  The Hurwitz-Riemann formula for the co-volume of such lattices is given by $2g-2 + \sum_{i = 1}^k (1 - \frac{1}{m_i})$. In particular, one can take a sequence of triangle groups, i.e., lattices of signature $(0,m_1,m_2,m_3)$ and letting $m_i$ go to infinity, we get lattices with bounded volume but growing torsion.  It is been long known that only finitely-many of those can be arithmetic,  see \cite{takeuchi1977arithmetic} as well as \cite{borel1981commensurability}. An even more surprising construction is given in  \cite{jones1998minimal},  where the authors construct a sequence of lattices in $\mathrm{PSL}_2(\C)$ with the property that the size of torsion subgroups is bounded, the volume is bounded, but the minimal index of a torsion-free subgroup grows to infinity.  In this construction as well, only finitely many of these lattices can be arithmetic. 

\section*{Acknowledgements}
The second author would like to thank Oren Becker for discussions regarding \S \ref{sec:upper bound}. The authors wish to thank Nir Avni for very useful suggestions regarding \S \ref{section:lower bound}.

 \section{Upper Bound}\label{sec:upper bound}
 
 The setting of this paper is that of arithmetic lattices. We recall the definition. Let $H$ be a simple connected linear Lie group,
 and let $G$ be a simple, simply connected, connected algebraic group defined over a number field $k$,
 with an epimorphism 
 $\phi: G( k \otimes_\Q \R) \rightarrow H$ whose kernel is compact.
  Then $\phi(G(\mathcal{O}))$ and subgroups of $H$ which are commensurable to it are called \emph{arithmetic}. Such $G$ will be called admissible. By \cite{margulis1991discrete}, all lattices in higher-rank arise in this way, and by \cite{Cor92,GS92} the same is true for some of the rank one groups.\\
 
 We are going to be interested in the structure of $\mathcal{O}$, the ring of integers of a number field $k$ of degree $d$. Our strategy is to find a congruence subgroup of small index, since congruence subgroups are generally torsion-free. For example, when $\mathcal{O} = \Z$, a principal congruence subgroup of level $p$ is always torsion-free for $p>2$, see \cite[III.2.3]{kionke2012thesis}. 
 Since the index is equal to the size of the finite quotient, we would like to find an ideal $I$ in $\mathcal{O}$ such that $\mathcal{O} / I$ is small, and hence also $G(\mathcal{O}/I)$ is small.
One way to do this is to look for a rational prime $p$ which splits completely in $\mathcal{O}$, because then
  $(p) = \mathfrak{p}_1 \cdot \cdot \cdot \mathfrak{p}_d$ with $\mathcal{O}/\mathfrak{p}_1 \cong \mathbb{Z}/p \mathbb{Z}$. In other words, taking $I = \mathfrak{p_1}$, the quotient has size $p$, even if the degree is very large.

For a number field $k$ we denote by $d$ its degree, and $D_k$ the absolute value of its discriminant. We first prove the following lemma:
\begin{lem}[GRH] \label{lem: Main lemma for upper bound}
    Fix $\varepsilon > 0 $ and let $G$ be a simple, simply connected, connected algebraic group defined over $k$ and let $\mathcal{O}$ be its ring of integers. Then there exists $\Gamma_1 \lhd G(\mathcal{O})$  which is torsion-free and such that $[ G(\mathcal{O}) : \Gamma_1] \leq C(d + \log D_k)^{(2+\varepsilon)\dim G}$ where $C$ depends only on $\varepsilon$.
\end{lem}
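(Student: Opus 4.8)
The plan is to find a rational prime $p$ such that (i) $p$ splits completely in $\mathcal{O}$ and (ii) the principal congruence subgroup $\Gamma_1 = \ker\big(G(\mathcal{O}) \to G(\mathcal{O}/\mathfrak{p}_1)\big)$, for $\mathfrak{p}_1$ a prime above $p$, is torsion-free. For (ii) it suffices to take $p$ larger than some absolute constant (indeed $p \geq 3$, or at worst a constant depending only on the type of $G$, works: a nontrivial torsion element of $G(\mathcal{O})$ reducing to the identity mod $\mathfrak{p}_1$ would have order a prime $\ell$, and the standard argument — e.g.\ via the formal group / Minkowski's lemma — forces $\ell \mid p$ with bounded multiplicity, contradiction once $p$ is large enough). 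Since $\mathcal{O}/\mathfrak{p}_1 \cong \F_p$ when $p$ splits completely, the index is $[G(\mathcal{O}):\Gamma_1] = |G(\F_p)| \leq p^{\dim G}$ (crudely, $G(\F_p) \subseteq GL_N(\F_p)$ for an appropriate embedding, so $|G(\F_p)| \leq p^{N^2}$; more carefully $|G(\F_p)| = p^{\dim G}(1+o(1))$). Normality of $\Gamma_1$ in $G(\mathcal{O})$ is automatic as it is a kernel. So the whole lemma reduces to producing a \emph{small} completely split prime $p$, and the bound we want is $p \leq C'(d + \log D_k)^{2+\varepsilon}$.

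The key step is therefore an effective Chebotarev-type statement: letting $L/\Q$ be the Galois closure of $k/\Q$, a rational prime $p$ splits completely in $\mathcal{O}$ if and only if it splits completely in $L$, i.e.\ $\mathrm{Frob}_p$ is trivial in $\mathrm{Gal}(L/\Q)$. Under GRH, the effective Chebotarev density theorem (in the strong form due to Lagarias–Odlyzko, with explicit constants as refined by Bach–Sorenson or Oesterlé) gives a completely split prime $p$ with $p \ll (\log D_L)^2$, where $D_L$ is the discriminant of $L$. The remaining arithmetic is to bound $\log D_L$ in terms of $d$ and $\log D_k$: since $[L:\Q] \leq d!$ and the primes ramifying in $L$ are exactly those ramifying in $k$, the conductor-discriminant formula (or the standard estimate $\log D_L \leq (d! - 1)(\text{something}) + \ldots$) yields $\log D_L \ll_{d} \log D_k + d\log d \ll (d + \log D_k)^{1+o(1)}$ — and in fact a bound of the shape $\log D_L \leq d!\,(\log D_k + d)$ suffices, because the exponent $2+\varepsilon$ (rather than a clean $2$) absorbs the factorial: one checks $(d!(\log D_k + d))^2 \leq C(\varepsilon)(d + \log D_k)^{(2+\varepsilon)}$ is \emph{false} in general, so in fact the $d!$ must be handled more carefully — this is the point below.

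The main obstacle is precisely this last point: $d!$ grows far faster than any polynomial in $d$, so one cannot afford to let $\log D_L$ carry a factor of $d!$ through the square in Chebotarev. The resolution is that we do not need a prime splitting completely in the full Galois closure $L$; it is enough to split completely in $k$ itself, equivalently to kill $\mathrm{Frob}_p$ on the coset space $\mathrm{Gal}(L/\Q)/\mathrm{Gal}(L/k)$ — and the GRH-conditional effective Chebotarev theorem can be applied in the form that detects a prime $p$ with $\left(\frac{L/\Q}{p}\right) = 1$ using $\log D_k$ (the discriminant of the \emph{smaller} field) rather than $\log D_L$; the relevant bound is $p \ll (\log D_k + d\log d)^2$, which after crudely bounding $d \log d \leq d^2 \leq (\text{const})(d+\log D_k)^{2}$ and spending the extra $\varepsilon$ to also control a possible logarithmic loss, gives $p \leq C(\varepsilon)(d+\log D_k)^{2+\varepsilon}$. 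Raising to the power $\dim G$ gives the stated index bound. Thus the structure is: (1) reduce to finding a small completely split prime (torsion-freeness of deep congruence subgroups, standard); (2) apply GRH-effective Chebotarev with the right discriminant; (3) the bookkeeping converting the Chebotarev bound into the $(d+\log D_k)^{(2+\varepsilon)\dim G}$ form, where the $+\varepsilon$ is the cushion for the non-polynomial auxiliary terms.
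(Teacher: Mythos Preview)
Your overall strategy---pass to a principal congruence subgroup at a prime ideal of small norm---matches the paper's. The gap is in the step where you insist that $p$ split \emph{completely} in $k$. As you yourself note, ``splits completely in $k$'' is equivalent to ``splits completely in the Galois closure $L$,'' so effective Chebotarev inevitably brings in $\log D_L$, and you cannot shed the $(d{-}1)!$-type factor: the claimed bound $p \ll (\log D_k + d\log d)^2$ for the least completely split prime is not a known consequence of GRH, and the standard Lagarias--Odlyzko/Bach bound gives only $p \ll (\log D_L)^2$. Since $\log D_L \ge [L:k]\log D_k$ and $[L:k]$ can be as large as $(d-1)!$, this genuinely fails to fit inside $(d+\log D_k)^{2+\varepsilon}$. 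Your sentence asserting that Chebotarev ``can be applied \dots\ using $\log D_k$ rather than $\log D_L$'' is exactly the unjustified step; GRH for $\zeta_k$ controls degree-one prime ideals of $k$, not completely split rational primes.

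The paper sidesteps this entirely by dropping complete splitting: one only needs \emph{some} prime ideal $\mathfrak{a}\subset\mathcal{O}$ of small norm lying over a rational prime $p>d$. (The condition $p>d$ forces $\mathfrak{a}^{p-1}\nmid p\mathcal{O}$ and hence torsion-freeness via the criterion in \cite[III.2.3]{kionke2012thesis}; your ``$p\ge 3$ suffices'' relied on unramifiedness, which you were importing from the complete-splitting hypothesis.) To find such an $\mathfrak{a}$, the paper applies Landau's Prime Ideal Theorem for $k$ with the explicit GRH error term of Greni\'e--Molteni, $\pi_k(x)\ge \mathrm{Li}(x) - 13\sqrt{x}(\log D_k + d\log x)$, which involves only $d$ and $\log D_k$, never $D_L$. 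After discarding the at most $d^2$ prime ideals lying over rational primes $\le d$, one obtains $\mathfrak{a}$ of norm at most $C_\varepsilon(d+\log D_k)^{2+\varepsilon}$. In short: replace ``rational prime splitting completely'' by ``prime ideal of $\mathcal{O}$ of small norm,'' and use the prime ideal count for $k$ rather than Chebotarev for $L$.
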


\begin{proof}
We are going to choose a principal congruence subgroup with two properties. The first is that it is torsion-free, and the second is that its index (or equivalently, the size of the quotient) is small relative to the arithmetic data of $\mathcal{O}$. 
By \cite[III.2.3]{kionke2012thesis}, the principal congruence subgroup defined by an ideal $\mathfrak{a}$ is torsion-free if $\mathfrak{a}$ has the property that $\mathfrak{a}^{p-1}$ does not divide $p\mathcal{O}$ for every rational prime $p$.
Since $p\mathcal{O}$ decomposes as prime ideals with ramification index at most $d$, 
if $\mathfrak{a}$ sits above $p > d$,
this ensures that $\mathfrak{a}^{p-1}$ will not divide $p\mathcal{O}$, and hence the principal congruence subgroup defined by $\mathfrak{a}$ is torsion-free. Second, we wish to choose $\mathfrak{a}$ with a small inertia degree, that is, with $[\mathcal{O} / \mathfrak{a} : \Z / p \Z]$ being small.  This will be done using effective estimates on the number of prime ideals of small norms. 

By Landau's Prime Ideal Theorem, the number of prime ideals in $k$ of norm at most $x$ is at least $\mathrm{Li}(x) - \mathrm{Err}(x)$ for some error term. Among those, only $d^2$ can sit above a prime which is smaller than $d$, thus, we get our required prime ideal as soon as $\mathrm{Li}(x) > \mathrm{Err}(x) + d^2 $. By \cite[Cor. 1.4]{grenie2019explicit}, (which depends on GRH), one can take
\[
\mathrm{Err}(x) \leq 13\sqrt{x}(\log(D_k) + d\log(x))
\]

Plugging this estimate on the error with the above inequality, together with the fact that 
$\mathrm{Li}(x) > \frac{x}{\log x}$ for large enough $x$, we get that such a prime ideal exists as soon as $x > C_1(\log D_k + d)^{2+\varepsilon}+d^2 > C_2(\log D_k+d)^{2+\varepsilon}$, for $C_2$ depending on $\epsilon$.

By the choice of $\mathfrak{a}$, we know that $G(\mathfrak{a})$ is torsion-free. We have that $[G(\mathcal{O}: G(\mathfrak{a})] \leq |\mathcal{O} / \mathfrak{a}|^{\dim G}$. But the norm of $\mathfrak{a}$ is exactly the cardinality of the field $\mathcal{O} / \mathfrak{a}$, which is $C_2(\log D_k+d)^{2+\varepsilon}$. And the lemma is proved.
\end{proof}

We are now ready to prove Theorem \ref{thm: intro GRH main theorem}. 
\begin{proof}[Proof of Thm. \ref{thm: intro GRH main theorem}]
%First, we can assume $H$ has trivial centre since dividing by the centre will not change our bounds on the index. Now, 
Let $\Gamma$ be an arithmetic lattice in $H$.
Then there exists $G$, an admissible algebraic group defined over $k$, such that $G( k \otimes_\Q \R)$ is isogenous to $H \times K$ for $K$ a compact Lie group, and a lattice $\tilde{\Gamma} \subset G(k)$ (\cite[Prop. 1.2]{ryzhikov1997classification}) which is commensurable to $G(\mathcal{O})$, such that the projection of $\tilde{\Gamma}$ to $H$ is equal to $\Gamma$.
Moreover, one can change the integral basis such that $\tilde{\Gamma} \subset G(\mathcal{O})$, see \cite[Lemma 6]{vinberg1971rings}.
Since the index of the image of a finite-index subgroup of $\tilde{\Gamma}$ will only decrease, it is enough to find a torsion-free subgroup of small index in $\tilde{\Gamma}$, and in fact, in $G(\mathcal{O})$. Moreover, the image of a torsion-free discrete group by a map with a compact kernel $K$ is again torsion-free, since if $\varphi(\gamma^n) = e$ then $\gamma^n \in K$ and is of finite order as well. By Lemma \ref{lem: Main lemma for upper bound}, there exists a torsion-free subgroup of index at most $C(d+\log D_k)^{(2+\varepsilon)\dim G}$ in $G(\mathcal{O})$. By Prasad's volume formula \cite{prasad1989volumes}, and \cite[Sec. 3.3]{belolipetsky2007counting}, there exist constants $c_1, c_2$ which depend only on $H$, such that
$\log D_k \leq c_1 \log \mathrm{vol}(H/\Gamma)$ and  $d \leq c_2 \log \mathrm{vol}(H/ \Gamma)$, hence the index of our torsion-free subgroup is at most $c_3(\log \mathrm{vol}(H/\Gamma))^{(2+\varepsilon)\dim H} $
\end{proof}
~\\

The case for Theorem \ref{thm: intro unconditional main theorem} is much easier. While it is possible to give some unconditional bounds on the norms of prime ideals, for our purposes such bounds do not yield better results than just looking at the ideal $p\mathcal{O}$ for $p > d$. One can also forget about the field $k$ and look at the $\Z$ points of the restriction of scalars of $G$. That is, in the notation of the proof above, $\tilde{\Gamma} \leq G_1(\Z)$ where $G_1$ is defined over $\Q$, and $\dim G_1 = d \cdot \dim H$. The $3-$rd congruence subgroup is already torsion-free, and its index is $3^{d \dim H}$. By the same argument as before, this translates into $[\Gamma : \Gamma_1 ] \leq v^{c \dim H}$.
\qed

\section{Lower Bound}\label{section:lower bound}
In this section, we will build a sequence of lattices with a relatively large torsion subgroup in terms of their co-volume. This will show a lower bound for the index of a torsion-free normal subgroup since the index  must be larger than the size of every finite subgroup. Our lower bound is of order $\sim \frac{\log v}{\log \log v}$.
%Add in $SO(2,1)$ an element of order $n$, for example a a rotation matrix in $SO(n)$, $cos(n\alpha) = 1$.
%Chebyshev Polynomial gives cos(nalpha) as a polynomial in cos(alpha). So we want a field which includes this element. A chebyshev polynomial has a leading coefficient of $2^n$ or something. We get an S-arithmetic with Q2, but we want the Q2 part we want it to be anisotropic to get a lattice in SO(2,1).
%Now need to check the co-volume, so what is the discriminant of the Chebyshev polynomial? omega is a primitive unity root, and then take the root with the largest real part. Take the $T+\omega x_1^2 +x^2 -x_3^2$ such that for the two largest roots this will be negative and for all the others it is positive, and we get a lattice in SL2 times SL2. We also need it to be anisotropic over $\Q_2$, how do  we do that? We need $x^2+y^2 = cz^2$ where $c = T+\omega$ and we need to make sure that $c$ has an odd valuation over $\Q_2$
We are going to construct elements of prime order for simplicity, since the estimates on the discriminant are nicer, but the following lemma can be stated in exactly the same way for every natural number.

\begin{lem}
    Let $G = SL_2(\R)$. Then there exists a sequence of arithmetic lattices $\Gamma_p$, where $p$ runs over all prime numbers, which contain an element of order $p$, and such that $\operatorname{vol}(G/\Gamma_p) = O(p^p)$.
\end{lem}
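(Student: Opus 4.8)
The plan is to realize the cyclic group of order $p$ inside a Fuchsian arithmetic lattice coming from quaternion algebras, and then estimate the covolume via Prasad's/Borel's volume formula, controlling the discriminant of the defining field. The natural candidate is the following: take the real cyclotomic field $k = \Q(\cos(2\pi/p)) = \Q(\zeta_p)^+$, which has degree $(p-1)/2$ over $\Q$, and inside $\mathrm{SL}_2(\R)$ consider the arithmetic Fuchsian group obtained from a quaternion algebra $A$ over $k$ that splits at exactly one real place and ramifies at the others. The rotation by $2\pi/p$ lies in $\mathrm{SO}(2) \subset \mathrm{SL}_2(\R)$ and has trace $2\cos(2\pi/p) \in \mathcal{O}_k$, so its characteristic polynomial has coefficients in $\mathcal{O}_k$; this element generates (together with a suitable second generator) a subgroup of the norm-one units of an order in $A$, hence sits in a commensurable arithmetic lattice $\Gamma_p$. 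First I would pin down $A$ and the order $\mathcal{O}$ so that $\Gamma_p$ genuinely contains an element of order $p$ — it is cleanest to take $A$ unramified at all finite places (possible by choosing the ramification set to be the $(p-1)/2 - 1$ non-identity real places, whose cardinality has the right parity, so the total ramification set has even size) and $\mathcal{O}$ a maximal order, so the volume formula is as clean as possible.

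Next I would invoke the Borel volume formula for $\mathrm{SL}_2$ over a number field: for such a maximal arithmetic Fuchsian group the covolume is, up to an absolute constant, $D_k^{3/2} \cdot \zeta_k(2) \cdot (\text{finite factor from ramification})$ divided by a power of $(4\pi^2)^{d}$, where $d = [k:\Q] = (p-1)/2$. All the genuinely large contributions are controlled by $D_k$. The key arithmetic input is the estimate on the discriminant of the maximal real subfield of the $p$-th cyclotomic field: $D_k \le D_{\Q(\zeta_p)}^{1/2} = p^{(p-3)/2}$ (the discriminant of $\Q(\zeta_p)$ is $\pm p^{p-2}$), so $\log D_k = O(p \log p)$ and $D_k^{3/2} = O(p^{(3/4)(p-3)})$. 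The factors $\zeta_k(2)$, $(4\pi^2)^{-d}$ and the ramification correction are each at most exponential in $d = O(p)$, hence negligible against $D_k^{3/2}$; putting these together gives $\operatorname{vol}(G/\Gamma_p) = O(p^{Cp})$ for an absolute constant $C$, and since the statement only asks for $O(p^p)$ in the loose sense one can either absorb the constant or, if one wants the literal bound $p^p$, tune the choice of $A$ — but I would first check whether the intended reading of $O(p^p)$ already allows the constant in the exponent, in which case no tuning is needed.

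The main obstacle I anticipate is not the volume estimate itself but verifying cleanly that the constructed lattice really does contain an element of order exactly $p$ (and not merely an element whose image has that order after some isogeny or quotient): one must check that $2\cos(2\pi/p)$, viewed as a trace, is realized by a genuine torsion element of the norm-one group of the chosen order, which amounts to an embedding-of-the-cyclic-field argument — the field $\Q(\zeta_p) = k(\zeta_p)$ is a CM quadratic extension of $k$ that embeds into $A$ precisely when it is not split by any of the ramified places of $A$, and since all our ramified places are real while $\Q(\zeta_p)/k$ is imaginary at every real place, the embedding exists by the theorem on embeddings of quadratic fields into quaternion algebras. Once that embedding is in hand, the element of order $p$ in $\mathcal{O}_{k(\zeta_p)}^\times$ lands in a maximal order of $A$ after possibly conjugating, giving the desired torsion in a commensurable lattice. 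A secondary technical point is making the constants in the volume formula uniform in $p$, which is where I would lean on the explicit form of Borel's formula as recorded in the literature rather than re-deriving it.
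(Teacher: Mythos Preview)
Your proposal is correct and runs parallel to the paper's argument: both work over the same field $k=\Q(\cos(2\pi/p))$, both arrange a $k$-form of $\mathrm{SL}_2$ that is anisotropic at all archimedean places but one, and both finish with the discriminant estimate $\log D_k = O(p\log p)$ fed into a Prasad/Borel-type volume bound. The packaging differs: the paper realizes the group as $SO(f)$ for a ternary form $x^2+y^2-cz^2$ with $c\in k$ chosen so that all the Galois conjugates of $c$ (except one) are negative, and exhibits the order-$p$ element explicitly as the rotation matrix with entries in $\mathcal{O}_k[\tfrac12]$; you use the equivalent quaternion-algebra description and obtain the torsion via the embedding criterion for the CM extension $\Q(\zeta_p)/k$. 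Your route makes the existence of torsion conceptually cleaner, while the paper's makes the element concrete; the volume analysis is the same either way.

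One small slip to fix: your parity claim is not always correct. The number of real places you want to ramify is $(p-1)/2-1=(p-3)/2$, which is odd when $p\equiv 1\pmod 4$. For those primes you must add a single finite place to the ramification set; choose it inert in $\Q(\zeta_p)/k$ so that the embedding criterion is still satisfied. This extra local factor contributes only a bounded multiplicative constant to the covolume, so the estimate $\operatorname{vol}(G/\Gamma_p)=p^{O(p)}$ is unaffected.
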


\begin{proof}
    We are going to build an arithmetic lattice in $SL_2(\R)$ using the quadratic form $f(x,y,z) = x^2 + y^2 - cz^2$ where $c$ will be some positive real number we shall soon choose. The group $SO(f)$ will be isogenous to $SL_2(\R)$. Choosing $c$ to be in some totally real number field $K$ with a ring of $S$-integers $\mathcal{O}(S)$ will define the discrete group $G(\mathcal{O}(S))$ which is a lattice in the group $G_S = \prod_{v \in S} G_{K_v}$.
    If we make sure that at all valuations, except the standard archimedean one, the groups $G_{K_v}$ are anisotropic, we will get that the projection onto $SO(f)$, or $SL_2(\R)$, is a lattice.
    Specifically, we will choose $S$ to be the set of archimedean valuations together with the primes above $2$. The number field will be $\Q[\omega]$ where $\omega = \cos{\frac{2\pi}{p}}$.
    The minimal polynomial of $\cos{\frac{2\pi}{p}}$ is related to the Chebyshev Polynomial $T_p$, see \cite{liang1976integral, watkins1993minimal}. It is defined over $\Z$ and has a leading coefficient which is a power of $2$, hence its zeroes are $\mathcal{O}(S)$-integers for our choice of $S$. This implies that the rotation matrix of order $p$ is in $G(\mathcal{O}(S))$.

    We choose $c = T + \omega$ where $T \in \Q$ and satisfies the following two properties: First, choose $T$ such that $ \cos(\frac{3\pi}{p}) < -T < \cos(\frac{2\pi}{p})$. Second, choose $T = a/b$ such that the $2$-adic valuation of $T+ \omega$ is odd. Since the first condition is an open interval, finding a $T$ which also satisfies the second condition is possible.
    
    This will imply that there are no non-trivial solutions for $x^2 + y^2 - cz^2 = 0$ over all $K_v$ except for the standard Euclidean valuation. For archimedean valuations, this is clear since the Galois conjugates of $\omega$ are $\cos(\frac{k\pi}{p})$, and thus $T+\sigma(\omega)$ is positive for any non-trivial field automorphism $\sigma$.
    For the $2$-adic valuation, there are no solutions since the $2$-valuation of $c$ will be odd, and thus the equation $x^2+y^2 = cz^2$ can not have non-trivial solutions. We thus get that the projection of $G(\mathcal{O}(S))$ onto $SL_2(\R)$ is still a lattice, and we denote it by $\Gamma_p$. We have seen that it contains an element of order $p$.\\

    Finally, we compute the co-volume of $\Gamma_p$. By a result of Mahler, see \cite{liang1976integral}, the discriminant of $K$ is given by $p^{\frac{p-2}{2}}$. Moreover, by \cite{belolipetsky2007counting}, there exist constants $a,b$, such that $\operatorname{vol}(G/ \Gamma_p) \leq a D_K^{b}$, hence 
    $$ 
    \log \operatorname{vol}(G/\Gamma_p) \leq c \: p \log p.
    $$
    %and we get that the minimal index of a torsion-free subgroup in $\Gamma_p$ is at least $\log \operatorname{vol(G/\Gamma_p}^{2}$.

    %$$f(v) \geq p = p \log p / \log p   \geq \log v / \log \log v$$
\end{proof}

\begin{rmk}
    We note that the same construction can be done in higher rank, i.e., in $SO(m,n)$ for $m,  n \geq 2$. 
    %It is also likely that a similar technique will work in $SL_n(\R)$ using a unitary construction.
\end{rmk}

We thus get an estimate on the lower bound, at least for $SO(m,n)$, where $m+n \geq 3$. Since the size of a finite subgroup is a lower bound on the index of a torsion-free subgroup, we get Theorem \ref{thm: intro lower bound}:
\begin{cor}
  A bound on the index of a torsion-free normal subgroup in terms of the co-volume of a lattice in $SO(n,m)$ can not be smaller than  $\sim \frac{\log v}{\log \log v}$.
\end{cor}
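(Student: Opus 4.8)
The plan is to package together the construction and volume estimate of the preceding lemma and observe that it forces the claimed lower bound on the index of any torsion-free normal subgroup. First I would recall that if $\Delta \lhd \Gamma$ is torsion-free and normal, then $\Delta$ cannot contain any element of finite order; in particular, if $g \in \Gamma$ has order $p$, the cyclic group $\langle g \rangle$ intersects $\Delta$ trivially, so the $p$ cosets $\Delta, g\Delta, \dots, g^{p-1}\Delta$ are distinct and hence $[\Gamma : \Delta] \geq p$. (One can also phrase this as: the image of $\langle g\rangle$ in the finite quotient $\Gamma/\Delta$ is a subgroup of order $p$, so $p \mid [\Gamma:\Delta]$.) This elementary observation is the only group-theoretic input needed.

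Next I would invoke the lemma with $G = SL_2(\R)$: for each prime $p$ we have a lattice $\Gamma_p$ containing an element of order $p$ and with $\log \operatorname{vol}(G/\Gamma_p) \leq c\, p \log p$, i.e. $v_p := \operatorname{vol}(G/\Gamma_p) = O(p^p)$. Combining with the previous paragraph, any torsion-free normal subgroup of $\Gamma_p$ has index at least $p$. It remains to re-express $p$ as a function of $v_p$. From $\log v_p \leq c\, p \log p$ one gets, after taking logarithms again, $\log\log v_p \leq \log p + \log\log p + O(1) = (1+o(1))\log p$, so that $p \geq \frac{\log v_p}{c \log p} \geq (1-o(1))\,\frac{\log v_p}{c \log\log v_p}$. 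Hence for a suitable constant $c' > 0$ the index is at least $c' \frac{\log v_p}{\log\log v_p}$, which is exactly the asserted bound. For the statement in $SO(m,n)$ with $m+n \geq 3$ I would simply cite the Remark following the lemma, which records that the same construction (using the quadratic form in more variables) produces lattices in $SO(m,n)$ with the analogous torsion and volume behaviour; the same two-line argument then applies verbatim.

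The one point requiring a little care — and the closest thing to an obstacle — is the passage from the volume bound to a clean expression of the form $\log v / \log\log v$: the inequality $\log v_p \lesssim p\log p$ must be inverted, and one should check that the logarithmic factors match up so that the constant $c'$ is genuinely positive and independent of $p$. This is a routine asymptotic manipulation (invert $p \log p \asymp \log v$ to get $p \asymp \log v/\log\log v$), but it is the only place where something could go wrong, so I would spell it out. Everything else — the non-existence of finite-order elements in a torsion-free group, and the divisibility $p \mid [\Gamma:\Delta]$ — is immediate.
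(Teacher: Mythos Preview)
Your proposal is correct and follows exactly the paper's approach: the paper derives the corollary in a single sentence, noting that the size of a finite subgroup bounds from below the index of any torsion-free subgroup, and then reads off the $\frac{\log v}{\log\log v}$ bound from the lemma's volume estimate $v_p = O(p^p)$. You have simply spelled out the two implicit steps --- the elementary fact that $[\Gamma:\Delta]\geq p$ when $\Gamma$ contains an element of order $p$, and the asymptotic inversion of $\log v \lesssim p\log p$ --- and cited the Remark for the passage to $SO(m,n)$, which is precisely what the paper intends.
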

\begin{comment}
\section{Unconditional Results}
Let $\Gamma \leq G$ be a lattice, and define 
$$S_\gamma^R := \{ g \in G \; | \; d(g \gamma g^{-1} , 1) < R \} $$
Suppose $F \leq \Gamma$ is a finite group of size $n$. Then there is a fixed point in $X$ for this finite group. Take $R$ to be larger then the diameter of $F \cdot x$, then a point in this compact set is inside all of the $S_{\gamma}^R$ for every $\gamma \in F$. Proposition 4.1 in Fraczyk, Hurtado, Raimbault says that every $g \in G$ lies in at most $d^m$ sets of the form $S_{\gamma}^R$ where $d = [k:\Q]$ and $m$ is some constant.\\

Define the function $f: G/\Gamma \rightarrow \N$ which assigns for every point in $G/\Gamma$ the number of $S_{\gamma}^R$ it belongs to. We want to bound the integral:

$$\int_{G/\Gamma} f(x) dx $$

In the proof of Theorem 4.5 of the same paper, given $\{ \gamma_1, ..., \gamma_k \}$ be conjugacy classes of $\Gamma$ such that $S_{\gamma}^R$ is non-empty, they bound the sum 
$$ \sum_{i=1}^k \mathrm{vol}(S_{\gamma_i}^R) $$ 
But using Fubinni's Theorem this is equal to $$= \sum_{i=1}^k \int_{ G/\Gamma} 1_{S_{\gamma_i}^R} $$
\end{comment}

\section{Size of Finite subgroups}

For completeness, we give the following bound on the size of finite subgroups of an arithmetic lattice, which is probably known to experts. Since the index of a torsion-free subgroup bounds this quantity, we wanted to give an unconditional bound for this quantity as well.

\begin{lem}\label{torsion_in_glnk}
Let $k$ be a number field of degree $d$, and let $g$ be an element of order $\ell < \infty$ in $GL_n(k)$, then $\ell \leq 2(nd)^{2n}$.

\end{lem}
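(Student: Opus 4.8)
The plan is to reduce the bound on the order $\ell$ of a torsion element to a statement about roots of unity and cyclotomic polynomials. First I would observe that if $g \in GL_n(k)$ has finite order $\ell$, then $g$ is diagonalizable over $\overline{\Q}$ and all of its eigenvalues are roots of unity; moreover, since $g^\ell = 1$ and $g$ has order exactly $\ell$, at least one eigenvalue is a primitive $\ell$-th root of unity $\zeta_\ell$. The characteristic polynomial of $g$ has coefficients in $k$ and degree $n$, so $k(\zeta_\ell)$ embeds into a field of degree at most $n$ over $k$ — concretely, the minimal polynomial of $\zeta_\ell$ over $k$ divides the characteristic polynomial of $g$, hence $[k(\zeta_\ell):k] \leq n$, and therefore $[\Q(\zeta_\ell):\Q] \leq nd$. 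Since $[\Q(\zeta_\ell):\Q] = \varphi(\ell)$, this gives $\varphi(\ell) \leq nd$.

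The remaining step is purely elementary number theory: a lower bound for Euler's totient function. One has the classical estimate $\varphi(\ell) \geq \sqrt{\ell}$ for all $\ell \neq 1, 2, 6$ (and for those small exceptions the claimed bound $\ell \leq 2(nd)^{2n}$ is trivially satisfied since $nd \geq 1$). Combining $\sqrt{\ell} \leq \varphi(\ell) \leq nd$ yields $\ell \leq (nd)^2 \leq 2(nd)^{2n}$ for $n \geq 1$, which is in fact considerably stronger than what is stated; the weaker form with the exponent $2n$ and the factor $2$ leaves ample slack to absorb the finitely many exceptional $\ell$ and any edge cases (e.g. $n = 1$, where one still gets $\ell \leq 2d^2$).

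I expect the main obstacle — really the only point requiring care — to be justifying the inequality $[\Q(\zeta_\ell):\Q] \leq nd$ cleanly. The subtlety is that $g$ need not be a scalar or have $\zeta_\ell$ as an eigenvalue a priori unless we use that $g$ has order \emph{exactly} $\ell$: if the order were merely a divisor of $\ell$ the argument would fail. So the key is that some eigenvalue $\lambda$ of $g$ is a primitive $\ell$-th root of unity, which follows because the orders of the eigenvalues have least common multiple equal to $\operatorname{ord}(g) = \ell$ (as $g$ is diagonalizable), and then one picks the eigenvalue whose order is divisible by the largest prime power dividing $\ell$ — or more simply, passes to an appropriate power. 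Once $\lambda = \zeta_\ell^{j}$ with $\gcd(j,\ell)=1$ is an eigenvalue, it is a root of the degree-$n$ characteristic polynomial $\chi_g \in k[x]$, so $[k(\lambda):k] \leq n$, and the tower law with $[k:\Q] = d$ finishes it. Everything else is bookkeeping with the totient bound.
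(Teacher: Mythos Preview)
Your argument contains a genuine gap at the step where you claim that $g$ has an eigenvalue which is a \emph{primitive} $\ell$-th root of unity. Diagonalizability only gives that the order of $g$ is the \emph{least common multiple} of the orders $m_1,\ldots,m_s$ of its eigenvalues, not that some eigenvalue attains order $\ell$. Your attempted patch---picking an eigenvalue whose order is divisible by the largest prime power in $\ell$, or ``passing to an appropriate power''---does not produce a primitive $\ell$-th root; it only guarantees that for each prime power $p^a\mid\ell$ \emph{some} eigenvalue has order divisible by $p^a$, and these may be different eigenvalues for different primes.

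This is not a technicality: the stronger bound $\varphi(\ell)\le nd$ (and hence $\ell\le (nd)^2$) that you derive from it is actually false. Take $k=\Q$ (so $d=1$), $n=13$, and let $g\in GL_{13}(\Q)$ be block-diagonal with blocks the companion matrices of $\Phi_2,\Phi_3,\Phi_5,\Phi_7$ (degrees $1,2,4,6$). Then $g$ has order $\ell=\operatorname{lcm}(2,3,5,7)=210$, yet $(nd)^2=169<210$ and $\varphi(210)=48>13=nd$. No eigenvalue of $g$ is a primitive $210$-th root of unity.

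The paper's proof handles exactly this issue: it bounds each $\varphi(m_i)\le nd$ (this part your argument does give, applied to each eigenvalue separately) and then uses the inequality $\varphi(\operatorname{lcm}(m_1,\ldots,m_s))\le\prod_i\varphi(m_i)$ together with $s\le n$ to conclude $\varphi(\ell)\le (nd)^n$. The exponent $2n$ in the statement, which you treated as slack, is in fact needed; it absorbs the product over the (up to $n$) distinct eigenvalue orders.
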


\begin{proof}
Suppose $g^\ell = 1$. This implies that the minimal polynomial of $g$, $p_g \in k[x]$, divides $x^\ell-1$, which has distinct roots of unity. This means that the eigenvalues of $g$ are roots of unity of orders $m_1,...,m_s$ with $\operatorname{lcm}(m_1,...,m_s) = \ell$.
We write $p_g = f_1 \cdot ... \cdot f_t$ where $f_i$ are irreducible over $k$. Since $\deg p_g \leq n$, we have that 
\begin{align}\label{degrees_inequality}
    \sum_{i = 1}^t \deg f_i \leq n ,
\end{align}
but $\deg f_i = [k[\zeta_i] : k]$ for some $\zeta_i$ being a primitive root of unity of order $m_i$. \\
Denote by $\varphi$ the Euler totient function, and note that the diagram
\[\begin{tikzcd}
	& k \\
	{\mathbb{Q}} && {k[\zeta_i]} \\
	& {\mathbb{Q}[\zeta_i]}
	\arrow["d", hook, from=2-1, to=1-2]
	\arrow["{\varphi(m_i)}"', hook, from=2-1, to=3-2]
	\arrow["{\leq d}"', hook, from=3-2, to=2-3]
	\arrow[hook, from=1-2, to=2-3]
\end{tikzcd}\]
implies that $\varphi(m_i) \leq d \cdot [k[\zeta_i] : k] \leq d \cdot \varphi(m_i)$, and so, multiplying (\ref{degrees_inequality}) by $d$ we get

$$\sum_{i=1}^t \varphi(m_i) \leq nd  $$
and in particular $\varphi(m_i) \leq nd$ for every $i$. Since $\operatorname{lcm}(m_1,...,m_s) = \ell$ we have $$\varphi(\ell) \leq \varphi(m_1) \cdot ... \cdot \varphi(m_s) \leq (nd)^n $$
Finally, $\varphi(l) \geq \frac {\sqrt{l}} {\sqrt 2} $ and so we get that $\ell \leq 4 (nd)^{2n}$
\end{proof}

\begin{lem} \label{lem: size of torsion elements}
Let $\Gamma \leq H$ be an arithmetic lattice of co-volume $v$ and let $a  \in \Gamma$ be an element of order $\ell < \infty$, then 
there exist constants $c_1, c_2>0$ depending only on $H$ such that $ \ell \leq  c_1 (\log v)^{c_2} $

\end{lem}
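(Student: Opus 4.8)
The statement says: for an arithmetic lattice $\Gamma$ in $H$ of covolume $v$, any torsion element $a \in \Gamma$ has order $\ell \leq c_1(\log v)^{c_2}$. This should follow by combining Lemma \ref{torsion_in_glnk} with the volume estimates already cited in the proof of Theorem \ref{thm: intro GRH main theorem}.

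First I would reduce to the algebraic group picture exactly as in that proof: there is an admissible $G$ over a number field $k$ of degree $d$ with discriminant $D_k$, and a lattice $\tilde\Gamma \subseteq G(k) \subseteq GL_N(k)$ (for $N$ depending only on $\dim G$, hence only on $H$) mapping onto $\Gamma$ with compact kernel. A torsion element $a \in \Gamma$ of order $\ell$ lifts to an element $\tilde a \in \tilde\Gamma$, but one must be slightly careful: the kernel $K$ is compact, so $\tilde a$ need not have finite order. However, $\tilde a^\ell \in K$, and $\tilde a^\ell$ is an element of the discrete group $\tilde\Gamma$ lying in the compact group $K$, so it lies in the finite group $\tilde\Gamma \cap K$; hence $\tilde a$ itself has finite order, a multiple of $\ell$. (Alternatively, pick $\tilde a$ to be a preimage of minimal order, or replace $\tilde\Gamma$ by a finite-index subgroup trivially intersecting $K$.) So it suffices to bound the order of a torsion element in $\tilde\Gamma \subseteq GL_N(k)$.

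Then I apply Lemma \ref{torsion_in_glnk} with $n = N$: the order of any torsion element of $GL_N(k)$ is at most $2(Nd)^{2N}$. Since $N$ is a constant depending only on $H$, this is $\leq c \cdot d^{2N}$ with $c = c(H)$. Finally I invoke the volume estimates from Prasad's volume formula and \cite[Sec. 3.3]{belolipetsky2007counting}, already used above: $d \leq c_2' \log \mathrm{vol}(H/\Gamma) = c_2' \log v$ (for $v$ large; for bounded $v$ there are only finitely many commensurability classes and the bound is trivial after adjusting constants). Combining, $\ell \leq c \,(c_2' \log v)^{2N} = c_1 (\log v)^{c_2}$ with $c_2 = 2N$ depending only on $\dim H$ and $c_1$ on $H$.

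**Main obstacle.** The only genuinely delicate point is the lifting step — ensuring that a torsion element downstairs is controlled by a torsion element upstairs despite the compact (possibly infinite) kernel $K$; the argument above (using discreteness of $\tilde\Gamma$ to force $\tilde\Gamma \cap K$ finite, as is already noted in the excerpt's proof of Theorem \ref{thm: intro GRH main theorem}) resolves it, but it is worth stating carefully. Everything else is a direct citation: Lemma \ref{torsion_in_glnk} for the field-theoretic bound, and the Prasad/Belolipetsky inequalities $d, \log D_k = O(\log v)$ for converting arithmetic data into covolume.
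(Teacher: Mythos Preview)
Your proposal is correct and follows essentially the same route as the paper: embed (a lift of) the lattice in $GL_N(k)$ with $N$ depending only on $H$, invoke Lemma~\ref{torsion_in_glnk} to get $\ell \le 2(Nd)^{2N}$, and then use the Prasad/Belolipetsky inequality $d \le C\log v$. The only cosmetic difference is in the reduction step: the paper passes to the quotient by the finite center of $H$ and asserts $\Gamma \le GL_s(k)$ directly, whereas you lift to $\tilde\Gamma$ and argue via the compact kernel---your version is in fact the more carefully justified of the two.
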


\begin{proof}
We may devide by the finite center and assume $H$ is centre-free, and thus $\Gamma \leq GL_s(k)$ for some $s$ depending only on $H$. By Lemma $\ref{torsion_in_glnk}$, $\ell \leq 2(sd)^{2s}$. By Prasad's volume formula and \cite[Sec. 3.3]{belolipetsky2007counting}, there exists a constant $C = C(H)>0$ such that $d \leq C \log v$, and so, 
$$ \ell \leq c_1 (\log v)^{c_2} $$
\end{proof}

\begin{prop} \label{prop: size of finite subgroups}
Let $\Gamma \leq H$ be an arithmetic lattice as above, then there exist $a, b$, depending only on $H$ such that every finite subgroup $F \leq \Gamma$ is of size at most $a (\log v)^{b}$

\end{prop}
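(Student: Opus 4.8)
The plan is to bound the size of a finite subgroup $F \leq \Gamma$ by combining the bound on the order of a single torsion element from Lemma~\ref{lem: size of torsion elements} with a bound on the minimal number of generators of $F$, or alternatively by invoking a Jordan-type theorem together with a bound on the abelian part. First I would reduce, as in the previous proofs, to the case where $H$ is centre-free and $\Gamma \leq GL_s(k)$ with $s = s(H)$ and $d = [k:\Q] \leq C(H)\log v$ by Prasad's volume formula and \cite[Sec.~3.3]{belolipetsky2007counting}. Now $F$ is a finite subgroup of $GL_s(k) \leq GL_{sd}(\Q) \leq GL_{sd}(\C)$. By Jordan's theorem there is a constant $J(sd)$ and an abelian normal subgroup $A \lhd F$ with $[F:A] \leq J(sd)$; since $sd \leq s\cdot C(H)\log v$, one may take $J(sd) \leq (sd)! \leq (C'\log v)^{C'\log v}$, or use Weisfeiler/Collins-type effective bounds giving $J(sd) \leq (sd+1)!$. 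The finite abelian group $A \leq GL_{sd}(\C)$ is simultaneously diagonalizable, hence is a product of at most $sd$ cyclic groups, and each cyclic factor has order at most the bound $\ell \leq 2(sd)^{2s}$ (applying Lemma~\ref{torsion_in_glnk} inside $GL_{sd}(\Q)$, or Lemma~\ref{lem: size of torsion elements} directly), so $|A| \leq \ell^{sd} \leq \big(2(s\cdot C\log v)^{2s}\big)^{s C\log v}$.

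Putting these together, $|F| = [F:A]\cdot|A| \leq J(sd)\cdot \ell^{sd}$, and taking logarithms the dominant term is of the form $(\log v)\log\log v$ up to constants depending only on $H$; in particular $|F| \leq \exp\big(c(H)(\log v)\log\log v\big)$. This is slightly weaker than the claimed $a(\log v)^b$, so to get the stated polynomial-in-$\log v$ bound I would instead argue more carefully: the group $F$ embeds in $GL_s(\mathcal{O}_k/I)$ for a suitable ideal $I$ of small norm (a prime ideal above a rational prime $p > d$ of norm $\leq$ poly$(d)$, as produced in the proof of Theorem~\ref{thm: intro unconditional main theorem}, for which the congruence kernel is torsion-free so that $F$ injects), whence $|F| \leq |GL_s(\mathcal{O}_k/I)| \leq (\mathrm{poly}(d))^{s^2} \leq (c_1\log v)^{c_2}$ with $c_1, c_2$ depending only on $H$. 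This is the cleaner route and directly yields $|F| \leq a(\log v)^b$.

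The main obstacle I anticipate is justifying that $F$ actually injects into $GL_s(\mathcal{O}_k/I)$: one needs $F \leq GL_s(\mathcal{O}_k)$ (arranged by the change of integral basis as in the proof of Theorem~\ref{thm: intro GRH main theorem}, possibly after passing to a conjugate or a commensurable lattice, which only changes constants) and one needs the principal congruence subgroup of level $I$ to be torsion-free, which holds by \cite[III.2.3]{kionke2012thesis} exactly when $I$ lies above a rational prime $p > d$ — precisely the condition used in \S\ref{sec:upper bound}. Once the reduction modulo $I$ is in place, the bound $|GL_s(\mathcal{O}_k/I)| \leq N(I)^{s^2}$ with $N(I) = \mathrm{poly}(d) = \mathrm{poly}(\log v)$ finishes the proof. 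I would therefore present the proof in this order: (1) reduce to $F \leq GL_s(\mathcal{O}_k)$ with $d \leq C\log v$; (2) find a prime ideal $I$ above some $p > d$ with $N(I) \leq \mathrm{poly}(d)$; (3) note the level-$I$ congruence subgroup is torsion-free, so $F \hookrightarrow GL_s(\mathcal{O}_k/I)$; (4) conclude $|F| \leq N(I)^{s^2} \leq a(\log v)^b$.
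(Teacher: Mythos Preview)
Your first approach is essentially the paper's, but you take an unnecessary detour that costs you the result: you embed $GL_s(k)$ into $GL_{sd}(\Q) \subset GL_{sd}(\C)$ via restriction of scalars and then apply Jordan's theorem in dimension $sd$. Since $d$ grows like $\log v$, your Jordan constant $J(sd)$ and the number of cyclic factors of the abelian part both depend on $v$, which is exactly why you only reach $\exp\big(c(\log v)\log\log v\big)$. The fix is simply to note that $k \hookrightarrow \C$, so $F \leq GL_s(k) \leq GL_s(\C)$ with $s = s(H)$ \emph{fixed}. Jordan's theorem in $GL_s(\C)$ then gives an abelian normal subgroup $A \lhd F$ of index bounded by a constant depending only on $H$; $A$ is diagonalizable in $GL_s(\C)$, hence is a product of at most $s$ cyclic groups, each of order at most $c_1(\log v)^{c_2}$ by Lemma~\ref{lem: size of torsion elements}. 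Thus $|F| \leq i(s)\cdot\big(c_1(\log v)^{c_2}\big)^s = a(\log v)^b$ unconditionally. This is exactly the paper's argument.

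Your second approach has a genuine gap as written. You claim that the proof of Theorem~\ref{thm: intro unconditional main theorem} produces a prime ideal $I$ above some $p > d$ with $N(I) \leq \mathrm{poly}(d)$, but it does not: that proof uses $p\mathcal{O}$ (or the level-$3$ congruence subgroup after restriction of scalars), giving norm of order $p^d \sim d^d$, i.e.\ index $\sim v^{c}$, not $(\log v)^c$. A prime ideal of norm polynomial in $d + \log D_k$ is exactly what Lemma~\ref{lem: Main lemma for upper bound} supplies, and that lemma is conditional on GRH. So your congruence-subgroup route proves the proposition only under GRH, whereas the statement (and the paper's proof) is unconditional. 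If you are willing to assume GRH, your argument is of course fine and is just the observation that $|F| \leq [\Gamma:\Delta]$ for the $\Delta$ of Theorem~\ref{thm: intro GRH main theorem}; but then you are proving something strictly weaker than what is asked.
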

\begin{proof}
    By Jordan's Theorem \cite{jordan1878memoire}, every finite subgroup of $GL_n(\C)$ contains an abelian subgroup of index at most $i$, where $i$ depends only on $n$. Thus, it is enough to bound the size of a finite abelian subgroup of $F$. Such a subgroup is diagonalizable, and every element is of order at most $c_1(\log v)^{c_2}$ by Lemma \ref{lem: size of torsion elements}, hence the size of $F$ is at most $(c_1(\log v)^{c_2})^n \leq a (\log v^{b})$ for some $a,b$ depending only on $H$.
\end{proof}

\section{Possible application}\label{section:application}
Let us give one example of a possible result using the recent breakthrough of \cite{fraczyk2023poisson}. In this paper, the authors show that the minimal number of generators of higher-rank torsion-free lattices is sub-linear in the co-volume, settling a conjecture of Abert, Gelander and Nikolov  \cite{abert2017rank} for this class of lattices. However, the case of higher-rank lattices with torsion is still open.
We thus note that almost any quantitative bound on the number of generators of a torsion-free lattice, combined with our result, would settle this case as well. This is because if $\Gamma_1 \leq \Gamma$ is a torsion-free normal subgroup of index $m$,
$d(\Gamma) \leq d(\Gamma_1) + d(\Gamma/ \Gamma_1)$ and $\mathrm{vol}(\Gamma_1) = m \cdot \mathrm{vol}(\Gamma)$. Denote the bound on the number of generators of a torsion-free lattice in terms of the co-volume by $f(v)$. Using Theorem \ref{thm: intro GRH main theorem} and the fact that a finite group of order $n$ can be generated by $\log(n)$ elements, we get:

\[ \frac{d(\Gamma)}{v} \leq \frac{d(\Gamma_1) + \log(m)}{v}  \leq \frac{f(v \log^{c}(v)) + \log \log v}{v}\]

Thus, any quantitative bound with the property that 
$f(v \log^{c}(v))$ is sub-linear in $v$ will do (conditionally). In particular, a bound of the form $d(\Gamma) \leq v^{1-\alpha}$ for some positive $\alpha$, which is considered quite possible. The actual bound for the number of generators is expected to be logarithmic in the volume, see \cite{lubotzky2022asymptotic}, where this is established for non-uniform lattices. A similar computation using Theorem \ref{thm: intro unconditional main theorem} will give an unconditional bound as long as $d(\Gamma_1) \leq v^{1/(c \dim H+1)}$. For example, any poly-logarithmic bound will do.
  
%  Actually, we can assume that an arithmetic lattice $\Gamma$ is \emph{contained} in $\phi(G(\mathcal{O}))$, and not just commensurable \color{red} Find in Platonov Rapinchuk \color{black}. Thus, since the kernel is compact, when restricted to $G(\mathcal{O})$ it is finite, hence if $G(\mathcal{O})$ is torsion-free, so is $\phi(G(\mathcal{O}))$. So it is enough to find a finite index torsion-free subgroup of $G(\mathcal{O})$, and the index of the image will only be smaller in $\Gamma$. 

\bibliography{mybibliography}
\bibliographystyle{alpha}

\end{document}